\newtheorem{thm}{Theorem}[section]
\newtheorem{lem}[thm]{Lemma}
\newtheorem{cor}[thm]{Corollary}
\newtheorem*{remark}{Remark}
\newtheorem{ex}{Example}
\title[The Euler-Glaisher Theorem over Totally Real Number Fields]%
{The Euler-Glaisher Theorem over Totally Real Number Fields}
\author[Se Wook Jang]{Se Wook Jang}\address{Department of Mathematics, Gangneung-Wonju National University, Gangneung 210-702, Korea}\email{dcgg@naver.com}
\author[Byeong Moon Kim]{Byeong Moon Kim}\address{Department of Mathematics, Gangneung-Wonju National University, Gangneung 210-702, Korea}\email{kbm@gwnu.ac.kr} 
\author[Kwang Hoon Kim]{Kwang Hoon Kim}\address{Department of Mathematics, Gangneung-Wonju National University, Gangneung 210-702, Korea}\email{h2078@naver.com}
\thanks{This work was supported by the National Research Foundation of Korea(NRF) grant funded by the Korea government(MSIT) (No. RS-2023-00247457)
}
\begin{document}
\maketitle

\begin{abstract}
In this paper, we study the partition theory over totally real number fields. Let $K$ be a totally real number field. A partition of a totally positive algebraic integer $\delta$ over $K$ is $\lambda=(\lambda_1,\lambda_2,\ldots,\lambda_r)$  for some totally positive integers $\lambda_i$ such that $\delta=\lambda_1+\lambda_2+\cdots+\lambda_r$. We find an identity to explain the number of partitions of $\delta$ whose parts do not belong to a given ideal $\mathfrak a$. We obtain a  generalization of the Euler-Glaisher Theorem over totally real number fields as a corollary. We also prove that the number of solutions to the equation $\delta=x_1+2x_2+\cdots+nx_n$ with $x_i$ totally positive or $0$ is equal to that of chain partitions of $\delta$. A chain partition of $\delta$ is a partition $\lambda=(\lambda_1,\lambda_2,\ldots,\lambda_r)$ of $\delta$ such that $\lambda_{i+1}-\lambda_i$ is totally positive or $0$. 
\end{abstract}

\section{introduction}
The theory of partitions of a natural number is an interesting area in additive number theory with many beautiful identities, theorems and  conjectures. It is initiated by Leibniz and the first important results are those of Euler. It is famous that Ramanujan made this area more plentiful by suggesting some astounding conjectures which disclose the magic of natural numbers.

A partition $\lambda$ of a positive rational integer $n$ is a $k$-tuple $(\lambda_1,\lambda_2,\cdots,\lambda_k)$ of  positive integers $\lambda_i$($i=1,2,\cdots,k$) such that $n=\lambda_1+\lambda_2+\cdots+\lambda_k$. For each $i$, $\lambda_i$ is a part of  $\lambda=(\lambda_1,\lambda_2,\cdots,\lambda_k)$. The number $k$ of the parts of a partition $\lambda$  is not specified in general. Two partitions 
$\lambda=(\lambda_1,\lambda_2,\cdots,\lambda_k)$ and $\lambda'=(\lambda'_1,\lambda'_2,\cdots,\lambda'_{k'})$ are the same if $k=k'$ and $\lambda'$ is obtained by some proper reordering of  $\lambda$. For a nonempty subset $H$ of $\mathbb Z^+$, the partition number  $p(''H'',n)$ is the number of all distinct partitions of $n$ whose parts belong to $H$. We write $p(''{\mathbb Z^+}'',n)$ as $p(n)$. There are various partition numbers defined by the numbers of all distinct partitions satisfying certain restrictions. It is an interesting area of partition theory to study the relations between these diverse partition numbers. Interpreting the coefficients of $q$-series identities, especially defined by infinite products by these miscellaneous partition numbers is a fundamental problem in partition theory. The identity 
\[\prod_{n=1}^{\infty}\frac1{1-q^n}=\sum_{n=0}^{\infty}p(n)q^n\] 
is a starting point of partition theory. Although the partition theory has been developed in various ways, most of the current results are acquired for positive rational integers. In 1950 Rademacher \cite{rad} developed the partition theories over real quadratic fields. Ever since, some results on the partitions over real quadratic fields \cite{mit} have been added. This paper is an attempt to extend some partition theory, especially Euler-Glaisher Theorem to general totally real number fields. Let $K$ be a totally real number field and $\mathcal O^+=\mathcal O^+_K$ be the set of all totally positive algebraic integers over $K$. For each nonempty subset $H$ of $\mathcal O^+$ and $\delta\in\mathcal O^+$, the partition $\lambda=(\lambda_1,\lambda_2,\ldots,\lambda_k)$ of $\delta$ in $H$ and the partition number $p(''H'',\delta)$ are defined in the same way as in the case of natural numbers. We define formal $q-$sums, a generalization of $q-$series, in $H$. We also define the generating function $\sum_{\delta\in\mathcal O^+}p(''H'',\delta)q^{\delta}$ of partitions in $H$ by showing the finiteness of partitions of $\delta$ in $H$ and by proving some basic properties of formal $q-$sums. We prove that for each ideal $\mathfrak a$ of $\mathcal O$ and $d\in\mathbb Z^+$ such that $d\in\mathfrak a$, $p(''\mathcal O^+\setminus\mathfrak a'',\delta)$ is equal to the number $p(''S''(\le d-1),\delta)$ of partitions of $\delta$ in $S$ such that the same parts are admitted at most $d-1$ times for some subset $S$ of $\mathcal O^+$, which is given explicitly. If $\mathfrak a$ is an ideal $(d)$ of $\mathcal O$ generated by $d$, then $S=\mathcal O^+$, and we obtain the natural generalization of the Euler-Glaisher Theorem over $K$. We consider another kind of partitions of $\delta\in\mathcal O^+$, which we call chain partitions. A partition $\lambda=(\lambda_1,\lambda_2,\cdots,\lambda_k)$ of $\delta$ is a chain partition if $\lambda_{i+1}-\lambda_i$ is totally positive or $0$ for all $i=1,2,\cdots,k-1$. We prove that the number of distinct chain partitions of $\delta$ is the same as that of the solutions to $\delta=x_1+2x_2+\cdots+kx_k$ for $x_i\in\mathcal O^+\cup\{0\}$. 

\section{basic definitions and theorems}
 
A $\textit{partition}$ $\lambda$ of a totally positive algebraic integer $\delta$ over $K$ is a finite  sequence $\lambda_1, \lambda_2, \ldots, \lambda_k$ of totally positive integers over $K$ such that $\sum_{i=1}^k\lambda_i=\delta$. The $\lambda_i$ are called the $parts$ of $\lambda$. The partition function $p(\delta)$ is the number of partitions of $\delta$ up to changing the order. The partition $\lambda$ is denoted by $(\lambda_1,\lambda_2,\ldots,\lambda_k)$. 

Let $S$ be a set of partitions. We denote $p(S,\delta)$ by the number of partitions of $\delta$ that belong to $S$. Let $H$ be a set of totally positive integers over $K$. Let $''H''$ be the set of all partitions whose parts lie in $H$, and $''H''(\le d)$ be the set of all partitions in which no part appears more than $d$ times and each part is in $H$. For a guidance of the partition theory of positive rational integers, see \cite{and11}. This paper use the analogous notations introduced in the book. 

The following two theorems are classical theorems in partition theory of natural numbers. Theorem \ref{gl} is a generalization of Theorem \ref{eu}. The contents of this and next two sections are generalizations of these theorems to the set $\mathcal O^+$ of all totally positive algebraic integers over $K$.

\begin{thm}[Euler]\label{eu}
Let $\mathscr O$ be the set of all partitions with odd parts and $\mathscr D$ be the set of all partitions with distinct parts. Then $p(\mathscr O,n)=p(\mathscr D,n)$ for all $n\in\mathbb Z^+$.
\end{thm}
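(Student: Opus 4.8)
The plan is to pass from the combinatorial statement to an identity of generating functions and then reduce that identity to an elementary product manipulation. First I would record the two generating functions. Since a partition with distinct parts uses each positive integer at most once, the decision of whether to include the part $n$ contributes a factor $1+q^n$, so
\[
\sum_{n=0}^\infty p(\mathscr D,n)q^n=\prod_{n=1}^\infty(1+q^n).
\]
Similarly, since a partition with odd parts may use each odd integer $2n-1$ arbitrarily often, the part $2n-1$ contributes $1+q^{2n-1}+q^{2(2n-1)}+\cdots=\tfrac1{1-q^{2n-1}}$, so
\[
\sum_{n=0}^\infty p(\mathscr O,n)q^n=\prod_{n=1}^\infty\frac1{1-q^{2n-1}}.
\]
These are identities of formal power series in $q$, the point being that for each fixed $n$ only finitely many factors contribute to the coefficient of $q^n$.

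The heart of the argument is the algebraic identity $\prod_{n=1}^\infty(1+q^n)=\prod_{n=1}^\infty\frac1{1-q^{2n-1}}$. I would prove it by writing each factor on the left as $1+q^n=\frac{1-q^{2n}}{1-q^n}$, so that
\[
\prod_{n=1}^\infty(1+q^n)=\prod_{n=1}^\infty\frac{1-q^{2n}}{1-q^n}=\frac{\prod_{n=1}^\infty(1-q^{2n})}{\prod_{n=1}^\infty(1-q^n)}.
\]
In the denominator I would split the product over all $n$ into its even-indexed and odd-indexed factors, $\prod_{n\ge1}(1-q^n)=\prod_{n\ge1}(1-q^{2n})\cdot\prod_{n\ge1}(1-q^{2n-1})$. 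The factor $\prod_{n\ge1}(1-q^{2n})$ then cancels against the numerator, leaving exactly $\prod_{n\ge1}\frac1{1-q^{2n-1}}$. Comparing the coefficient of $q^n$ on the two sides yields $p(\mathscr D,n)=p(\mathscr O,n)$ for every $n$.

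The one point that needs care, and which I expect to be the main obstacle, is the legitimacy of cancelling infinitely many factors, since this is not valid without justification. I would handle it by truncating: for a fixed target exponent $N$, every factor indexed by $n>N$ contributes $1$ to the coefficient of $q^N$, so the identity may be verified modulo $q^{N+1}$, where only finitely many factors survive and the cancellation is elementary; letting $N\to\infty$ gives the identity of formal power series. An alternative that avoids generating functions is the bijection sending a partition into odd parts, in which the odd value $m$ occurs $k_m$ times, to the partition into distinct parts obtained by writing each $k_m$ in binary as $k_m=\sum_j 2^{e_j}$ and recording the parts $2^{e_j}m$; uniqueness of the binary expansion, together with the unique factorization of a positive integer as a power of two times an odd number, shows this map is a bijection, yielding a second, purely combinatorial proof.
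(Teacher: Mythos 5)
Your proposal is correct. Note, however, that the paper itself gives no proof of this statement: Theorem \ref{eu} is quoted as classical background (attributed to Euler, with references to Euler and Glaisher in the bibliography), and the paper's actual work is the generalization to totally real number fields. That said, your argument is essentially the specialization to $K=\mathbb Q$, $d=2$ of the method the paper does use for its main theorem in Section 4: there the authors also rewrite $\frac1{1-q^{\delta}}$-type products via the factorization $1+q^{\delta}=\frac{1-q^{2\delta}}{1-q^{\delta}}$ (in the general form $\prod_{\delta}\frac{1-q^{d\delta}}{1-q^{\delta}}$) and cancel infinitely many factors by a telescoping rearrangement; the legitimacy of such cancellations, which you handle by truncating modulo $q^{N+1}$, is handled in the paper by the coefficientwise finiteness conditions built into the definition of infinite products of formal $q$-sums in Section 3 (resting on Corollary \ref{finiteness}). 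Your second, bijective proof via binary expansions of multiplicities is Glaisher's classical bijection; it is sound and purely combinatorial, but it is worth noting that this is the route the paper conspicuously does \emph{not} generalize --- over a general totally real field the generating-function argument survives, whereas a direct multiplicity-splitting bijection would require a canonical ``base-$d$'' decomposition of parts, which is exactly what the set $S$ and the ideal-theoretic bookkeeping in the paper's Theorem 4.1 are designed to replace.
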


\begin{thm}[Glaisher]\label{gl}
Let $\mathbb Z^+_d$ denote the set of all positive integers not divisible by $d$. Then $p(''{\mathbb Z^+_{d+1}}'',n)=p(''{\mathbb Z^+}''(\le d),n)$ for all $n\in\mathbb Z^+$.
\end{thm}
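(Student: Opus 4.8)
The plan is to prove the identity by comparing the generating functions of the two partition counts, since each is a product whose expansion is a well-defined formal power series in $q$ (the coefficient of each $q^n$ being determined by finitely many factors). First I would record the generating function for partitions whose parts avoid all multiples of $d+1$. Since a part $n$ with $(d+1)\nmid n$ may be repeated arbitrarily often, this is
\[
\sum_{n=0}^{\infty} p(''{\mathbb Z^+_{d+1}}'',n)\,q^n = \prod_{\substack{n\ge 1\\ (d+1)\nmid n}} \frac{1}{1-q^n}.
\]

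Next I would write the generating function for partitions in which no part appears more than $d$ times. A fixed part $n$ may be used $0,1,\dots,$ or $d$ times, contributing a factor $1+q^n+q^{2n}+\cdots+q^{dn}$, so that
\[
\sum_{n=0}^{\infty} p(''{\mathbb Z^+}''(\le d),n)\,q^n = \prod_{n\ge 1}\bigl(1+q^n+\cdots+q^{dn}\bigr).
\]
The core of the argument is then a purely algebraic simplification. Using the finite geometric sum $1+q^n+\cdots+q^{dn} = (1-q^{(d+1)n})/(1-q^n)$, the second product becomes $\prod_{n\ge 1}(1-q^{(d+1)n})(1-q^n)^{-1}$. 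The numerators $1-q^{(d+1)n}$ run over precisely the multiples of $d+1$, so I would cancel each such factor against the factor $1-q^m$ with $m=(d+1)n$ appearing in the denominator, leaving exactly $\prod_{(d+1)\nmid n}(1-q^n)^{-1}$, which is the first product. Comparing the coefficients of $q^n$ on both sides then yields $p(''{\mathbb Z^+_{d+1}}'',n)=p(''{\mathbb Z^+}''(\le d),n)$ for every $n$.

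The step requiring the most care is the cancellation inside the infinite product: I must justify it as an identity of \emph{formal} power series rather than as a manipulation of convergent analytic products. This is in fact routine, because for each fixed $n$ only the factors with index $m\le n$ can influence the coefficient of $q^n$; hence the partial products stabilize and the rearrangement and termwise cancellation are legitimate in the ring of formal power series.

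I would also note a purely bijective proof, which is closer in spirit to the constructions used in the generalization that follows. Given a partition with no part divisible by $d+1$, write the multiplicity of each part $m$ in base $d+1$ as $a_m=\sum_j b_{m,j}\,(d+1)^j$ with $0\le b_{m,j}\le d$, and send it to the partition in which each part $m(d+1)^j$ occurs $b_{m,j}$ times. Since every positive integer factors uniquely as $m(d+1)^j$ with $(d+1)\nmid m$, and since each resulting multiplicity is at most $d$, this map is a well-defined bijection onto the partitions counted by $p(''{\mathbb Z^+}''(\le d),n)$. The generating-function route is the most economical for this classical statement, but the base-$(d+1)$ bijection is the one I expect to adapt when passing to $\mathcal O^+$.
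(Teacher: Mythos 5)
Your generating-function argument is correct and is essentially the paper's own approach: the paper states Theorem~\ref{gl} as a classical result, but its proof of the generalization over $\mathcal O^+$ (the ideal-partition theorem and Corollary~\ref{glg}) is precisely this manipulation --- rewrite $1+q^{\delta}+\cdots+q^{d\delta}$ as $(1-q^{(d+1)\delta})/(1-q^{\delta})$ and cancel the factors indexed by multiples, with the same coefficientwise justification that only finitely many factors affect each coefficient. Your base-$(d+1)$ bijection is a correct alternative proof, though it is not the route the paper takes.
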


Let $K$ be a finite extention field of $\mathbb Q$. Let $H$ be a subset of $\mathbb R$. To define the partition number $p(''H'',\delta)$ of $\delta$ in $H$, it is necessary that there are finitely many $(\alpha,\beta)\in H\times H$ such that $\alpha+\beta=\delta$. It is known that for a totally real number field $K$, the set $\mathcal O_K^+\cup\{0\}$ of all totally positive algebraic integers with $0$ satisfies there are finitely many $\alpha,\beta\in\mathcal O_K^+\cup\{0\}$ such that  $\alpha+\beta=\delta$ for all $\delta\in\mathcal O_K^+\cup\{0\}$. The following lemma and corollary are given in \cite{kim}. But, since it is not published yet, we prove it anew.

\begin{lem}\label{ndissol}
For $n,a_1,a_2,\ldots,a_n\in\mathbb Z^+$, there are finitely many positive integers $a_0$ such that the equation $a_nx^n-a_{n-1}x^{n-1}+a_{n-2}x^{n-2}-\cdots+(-1)^na_0=0$ has $n$ distinct positive solutions.
\end{lem}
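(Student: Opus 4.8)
The plan is to use Vieta's formulas to turn the hypothesis ``$n$ positive roots'' into relations among the elementary symmetric functions of those roots, and then to exploit the fact that, among all the coefficients, only the constant term $a_0$ is allowed to vary. Suppose that for a given $a_0\in\mathbb Z^+$ the polynomial
\[
f(x)=a_nx^n-a_{n-1}x^{n-1}+a_{n-2}x^{n-2}-\cdots+(-1)^na_0
\]
has $n$ distinct positive solutions $x_1,x_2,\ldots,x_n$. First I would factor $f(x)=a_n\prod_{i=1}^n(x-x_i)$, expand, and express the coefficients through the elementary symmetric polynomials $e_k=e_k(x_1,\ldots,x_n)$. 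Matching the coefficient of $x^{n-k}$ gives $a_{n-k}=a_ne_k$ for each $k$; in particular $e_1=x_1+\cdots+x_n=a_{n-1}/a_n$ and $e_n=x_1x_2\cdots x_n=a_0/a_n$.

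The crucial observation is that $e_1$ depends only on the fixed data $a_{n-1},a_n$ and is independent of $a_0$, while $e_n$ is exactly $a_0/a_n$. Since the $x_i$ are positive with fixed sum $a_{n-1}/a_n$, the AM--GM inequality bounds their product:
\[
x_1x_2\cdots x_n\le\left(\frac{x_1+\cdots+x_n}{n}\right)^n=\left(\frac{a_{n-1}}{n\,a_n}\right)^n.
\]
Multiplying by $a_n$ yields $a_0=a_ne_n\le a_{n-1}^n/(n^na_n^{n-1})$, an upper bound free of $a_0$. Hence every admissible $a_0$ satisfies $1\le a_0\le a_{n-1}^n/(n^na_n^{n-1})$, so only finitely many positive integers $a_0$ can occur, which is the assertion.

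I do not expect a genuine obstacle here; the argument is essentially Vieta plus a one-line boundedness estimate. The only points requiring care are the sign bookkeeping in Vieta's formulas, so that the $(-1)^k$ pattern in $f$ matches $a_n\sum_k(-1)^ke_kx^{n-k}$, and the remark that positivity of all roots is precisely what makes each $e_k$ positive and thus consistent with the stated alternating signs. It is worth noting that distinctness of the roots is not actually needed for finiteness ($n$ positive roots counted with multiplicity already justify Vieta), and that the intermediate coefficients $a_1,\ldots,a_{n-2}$ impose further constraints but play no role in the bound on $a_0$.
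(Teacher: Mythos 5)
Your proposal is correct, and it proves the lemma by a genuinely different route than the paper. The paper's proof is a calculus argument: since $f$ has $n$ distinct positive roots, Rolle's theorem gives $f'$ exactly $n-1$ distinct positive roots; letting $\alpha$ be the smallest of these, a sign analysis at $0$ and $\alpha$ (namely $f(0)<0<f(\alpha)$ for $n$ odd, $f(0)>0>f(\alpha)$ for $n$ even) yields $0<a_0<(-1)^{n-1}\bigl(a_n\alpha^n-a_{n-1}\alpha^{n-1}+\cdots+(-1)^{n-1}a_1\alpha\bigr)$, and the key point is that $f'$ --- hence $\alpha$ and this bound --- does not involve $a_0$, so the bound is uniform over all admissible $a_0$. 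Your Vieta-plus-AM--GM argument replaces this with pure algebra and buys several things the paper's proof does not: an explicit closed-form bound $a_0\le a_{n-1}^n/(n^n a_n^{n-1})$ rather than one expressed through an implicitly defined critical point; the observation that only the top two coefficients $a_n,a_{n-1}$ need to be fixed, whereas the paper's bound depends through $\alpha$ on all of $a_1,\ldots,a_n$; and, as you note, no need for distinctness of the roots. Conversely, the paper's differentiation technique is thematically matched to how the lemma is deployed in the corollary that follows it (finiteness of totally positive integers of bounded trace is proved there by repeatedly differentiating the minimal polynomial and applying the lemma to each $f^{(r)}$), so the two proofs sit equally naturally in that induction; in fact your sharper version, needing only the two leading coefficients fixed, would let that induction collapse to a single step.
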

\begin{proof}
Let $f(x)=a_nx^n-a_{n-1}x^{n-1}+a_{n-2}x^{n-2}-\cdots+(-1)^na_0$. If $f(x)=0$ has $n$ distinct positive solutions, then $f'(x)=0$ has $n-1$ distinct positive solutions. Let $\alpha$ be the smallest solution of $f'(x)=0$. Then $f(0)<0<f(\alpha)$ if $n$ is odd, and $f(0)>0>f(\alpha)$ if $n$ is even. Since $a_0=(-1)^nf(0)>0$ and \begin{align*}
0&<(-1)^{n-1}f(\alpha)=(-1)^{n-1}f(\alpha)+(-1)^nf(0)-a_0\\
&=(-1)^{n-1}(f(\alpha)-f(0))-a_0,
\end{align*}
we have $0<a_0<(-1)^{n-1}(a_n\alpha^n-a_{n-1}\alpha^{n-1}+a_{n-2}\alpha^{n-2}-\cdots+(-1)^{n-1}a_1\alpha)$. Thus, the number of the desired $a_0$ is finite.
\end{proof}

\begin{cor}
For each $n, M>0$, there are finitely many totally positive algebraic integers $\alpha$ such that $\deg(\alpha)=n$ and ${\rm{tr}}(\alpha)<M$.
\end{cor}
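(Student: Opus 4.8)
The plan is to reduce the statement to a counting problem for monic integer polynomials and then to finish with Lemma~\ref{ndissol}. The case $n=1$ is immediate, since then $\alpha={\rm tr}(\alpha)$ is itself a positive rational integer less than $M$, of which there are finitely many; so I assume $n\ge 2$ throughout.

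First I would pass from $\alpha$ to its minimal polynomial. A totally positive $\alpha$ with $\deg(\alpha)=n$ is a root of
\[
m_\alpha(x)=\prod_{i=1}^n\bigl(x-\alpha^{(i)}\bigr)=x^n-a_{n-1}x^{n-1}+a_{n-2}x^{n-2}-\cdots+(-1)^na_0,
\]
where $\alpha^{(1)},\ldots,\alpha^{(n)}$ are the $n$ distinct real conjugates of $\alpha$. Since $\alpha$ is totally positive, every $\alpha^{(i)}$ is a positive real number, so $m_\alpha(x)=0$ has exactly $n$ distinct positive solutions. Because $\alpha$ is an algebraic integer, $m_\alpha$ is monic with rational integer coefficients, so each $a_j\in\mathbb Z$; and by Vieta's formulas $a_{n-k}$ equals the $k$-th elementary symmetric function of the $\alpha^{(i)}$, hence is positive. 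Thus $a_0,a_1,\ldots,a_{n-1}\in\mathbb Z^+$ with $a_n=1$, matching exactly the shape of the equation in Lemma~\ref{ndissol}. Since two totally positive integers share a minimal polynomial only when they are conjugate, each such polynomial accounts for at most $n$ of the $\alpha$ in question, so it suffices to bound the number of possible polynomials $m_\alpha$.

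Next I would use the trace hypothesis to confine the conjugates. By assumption ${\rm tr}(\alpha)=\sum_{i=1}^n\alpha^{(i)}=a_{n-1}<M$, and each $\alpha^{(i)}$ is positive, so every conjugate satisfies $0<\alpha^{(i)}<M$. Consequently the $k$-th elementary symmetric function obeys the crude bound $0<a_{n-k}<\binom{n}{k}M^k$ for $1\le k\le n-1$. As each $a_{n-k}$ is a positive integer lying in a bounded range, there are only finitely many possibilities for the tuple $(a_1,a_2,\ldots,a_{n-1})$, with $a_n=1$ fixed. Then, for each of these finitely many tuples, the polynomial $m_\alpha$ must have $n$ distinct positive roots, so Lemma~\ref{ndissol} supplies only finitely many admissible values of the constant term $a_0$. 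Combining the two finiteness statements yields finitely many candidate minimal polynomials of degree $n$, and therefore finitely many $\alpha$, as claimed.

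I expect the only point needing care to be the uniform bounding of the intermediate coefficients $a_1,\ldots,a_{n-1}$: it is the interplay of total positivity (all conjugates positive) with the single inequality ${\rm tr}(\alpha)<M$ that pins every conjugate into $(0,M)$ and thereby bounds all the symmetric functions except the last. Lemma~\ref{ndissol} is then precisely the tool that disposes of the remaining coefficient $a_0$, which is the norm of $\alpha$ and is not directly controlled by the trace; this is the reason the lemma is stated as a bound on the constant term with the higher coefficients held fixed.
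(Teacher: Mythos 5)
Your proof is correct, but it takes a genuinely different route from the paper's. The paper never uses the observation that the trace bound confines every conjugate to $(0,M)$. Instead, it fixes $a_{n-1}={\rm tr}(\alpha)<M$ (finitely many choices) and then runs a downward induction on the remaining coefficients, invoking Lemma \ref{ndissol} at every step: by Rolle's theorem the derivative $f^{(r)}(x)=\frac{n!}{(n-r)!}x^{n-r}-\frac{(n-1)!}{(n-r-1)!}a_{n-1}x^{n-r-1}+\cdots+(-1)^{n-r}r!\,a_r$ has $n-r$ distinct positive roots; its non-constant coefficients are determined by $a_{n-1},\ldots,a_{r+1}$, already known to range over a finite set, and its constant term is $\pm r!\,a_r$, so the lemma bounds $a_r$. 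You replace this entire derivative-and-induction mechanism by the single remark that positivity of all conjugates together with $\sum_i\alpha^{(i)}=a_{n-1}<M$ forces $\alpha^{(i)}\in(0,M)$ for every $i$, whence $0<a_{n-k}<\binom{n}{k}M^k$ for $1\le k\le n-1$; Lemma \ref{ndissol} then enters only once, to dispose of $a_0$. Your route is more elementary (no derivatives, no induction), yields explicit bounds on the coefficients, and carefully sidesteps the degenerate degree-one case of the lemma, which the paper's induction also implicitly avoids. In fact your argument proves more than you claim: since $a_0=\prod_i\alpha^{(i)}$ and each factor lies in $(0,M)$, the same confinement gives $0<a_0<M^n$, so your closing assertion that the norm ``is not directly controlled by the trace'' is mistaken, and the appeal to Lemma \ref{ndissol} could be dropped altogether, making the corollary independent of the lemma. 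The paper's argument, by contrast, is the one for which the lemma was actually designed, trading your explicit numerical bounds for a purely inductive bookkeeping of integer coefficients.
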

\begin{proof}
Let $\alpha$ be a  totally positive algebraic integer of degree $n$. Then there exists a polynomial $f(x)=x^n-a_{n-1}x^{n-1}+a_{n-2}x^{n-2}-\cdots+(-1)^na_0$ such that $f(\alpha)=0$ and $a_i\in\mathbb Z^+$. We can see $f^{(r)}(x)=\frac{n!}{(n-r)!}x^{n-r}-\frac{(n-1)!}{(n-r-1)!}a_{n-1}x^{n-r-1}+\cdots+(-1)^{n-r}r!a_r=0$ has $n-r$ distinct positive solutions. The number of positive integers $a_{n-1}$ satisfying ${\rm tr}(\alpha)=a_{n-1}<M$ is finite. The finiteness of $a_r$ follows from that of $a_{n-1}, a_{n-2}, \ldots,$ $a_{r+1}$ by Lemma \ref{ndissol}. By induction, we have that the number of the desired $\alpha$ is finite. 
\end{proof}

\begin{cor}\label{tr<M}
Let $K$ be a totally real number field. Then for all $M>0$, there are finitely many totally positive algebraic integers over $K$ whose trace is at most $M$.
\end{cor}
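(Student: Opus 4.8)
The plan is to deduce this from the previous corollary by bounding the degrees that can occur and comparing the field trace $\mathrm{tr}_{K/\mathbb Q}$ with the absolute trace (the sum of the conjugates) used there. Set $d=[K:\mathbb Q]$. Every $\alpha\in\mathcal O_K^+$ satisfies $n:=\deg(\alpha)=[\mathbb Q(\alpha):\mathbb Q]$, and since $\mathbb Q(\alpha)\subseteq K$ we have $n\mid d$, so in particular $1\le n\le d$; thus only finitely many degrees $n$ occur. I would first check that each such $\alpha$ qualifies as a totally positive algebraic integer of degree $n$ in the sense of the preceding corollary: because $K$ is totally real and $\alpha$ is totally positive in $K$, every embedding $\sigma\colon\mathbb Q(\alpha)\hookrightarrow\mathbb R$ extends to a real embedding of $K$, so all $n$ conjugates $\sigma(\alpha)$ are positive.

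Next I would compare the two traces. Writing $T(\alpha)$ for the sum of the conjugates of $\alpha$ (the quantity controlled by the previous corollary), the tower formula gives $\mathrm{tr}_{K/\mathbb Q}(\alpha)=\frac{d}{n}\,T(\alpha)$. Since $\alpha$ is totally positive, $T(\alpha)$ is a positive integer and $\frac{d}{n}\ge 1$, so $T(\alpha)\le\mathrm{tr}_{K/\mathbb Q}(\alpha)\le M$. Hence every $\alpha\in\mathcal O_K^+$ with $\mathrm{tr}_{K/\mathbb Q}(\alpha)\le M$ is a totally positive algebraic integer of some degree $n\in\{1,\dots,d\}$ whose absolute trace is at most $M$.

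Finally, applying the previous corollary (with the bound $M+1$, the traces being integers) to each of the finitely many degrees $n=1,\dots,d$ yields finitely many candidates in each degree; the union over these $n$ is finite and contains the set in question, which is therefore finite. The statement is essentially immediate from the previous corollary, the only points needing care being the observation $\deg(\alpha)\mid[K:\mathbb Q]$ and the trace comparison $T(\alpha)\le\mathrm{tr}_{K/\mathbb Q}(\alpha)$ for totally positive $\alpha$.
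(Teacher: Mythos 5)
Your proof is correct and follows exactly the route the paper intends: the corollary is stated without proof as an immediate consequence of the preceding corollary, via the observation that every $\alpha\in\mathcal O_K^+$ has degree $n=\deg(\alpha)$ dividing $[K:\mathbb Q]$, so one applies the preceding finiteness result for each of the finitely many possible degrees. Your additional care in checking that the conjugates of $\alpha$ are all positive and in comparing $\mathrm{tr}_{K/\mathbb Q}(\alpha)=\frac{d}{n}T(\alpha)$ with the absolute trace $T(\alpha)$ used in the preceding corollary fills in details the paper leaves implicit, and both steps are carried out correctly.
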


\begin{cor}\label{d=a+b}
For all $\delta\in\mathcal O^+$, there are finitely many $\alpha, \beta\in\mathcal O^+$ such that $\delta=\alpha+\beta$.
\end{cor}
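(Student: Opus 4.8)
The plan is to reduce the statement to Corollary \ref{tr<M} by showing that in any decomposition $\delta=\alpha+\beta$ with $\alpha,\beta\in\mathcal O^+$, the trace of $\alpha$ is bounded above by the trace of $\delta$. Write $d=[K:\mathbb Q]$ and let $\sigma_1,\ldots,\sigma_d\colon K\hookrightarrow\mathbb R$ be the real embeddings of the totally real field $K$, so that ${\rm tr}(\gamma)=\sum_{i=1}^d\sigma_i(\gamma)$ for every $\gamma\in\mathcal O$. The whole point is that a single scalar equation over $\mathcal O^+$ should be leveraged into $d$ simultaneous real inequalities through total positivity.

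First I would record the elementary consequence of total positivity. Since $\alpha$ and $\beta$ are totally positive, each embedding sends them to positive reals: $\sigma_i(\alpha)>0$ and $\sigma_i(\beta)>0$ for all $i$. Applying $\sigma_i$ to the relation $\delta=\alpha+\beta$ gives $\sigma_i(\delta)=\sigma_i(\alpha)+\sigma_i(\beta)$, whence $0<\sigma_i(\alpha)<\sigma_i(\delta)$ for each $i$. Summing over $i$ then yields
\[0<{\rm tr}(\alpha)<{\rm tr}(\delta).\]

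Next I would set $M={\rm tr}(\delta)$, a fixed positive rational integer since $\delta\in\mathcal O$. The displayed inequality shows that every $\alpha$ occurring as a summand in a decomposition $\delta=\alpha+\beta$ is a totally positive algebraic integer over $K$ whose trace is at most $M$, and by Corollary \ref{tr<M} there are only finitely many such $\alpha$. Because $\beta=\delta-\alpha$ is uniquely determined by $\alpha$, the number of pairs $(\alpha,\beta)\in\mathcal O^+\times\mathcal O^+$ with $\delta=\alpha+\beta$ is bounded by the number of admissible $\alpha$, hence finite.

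I do not expect any serious obstacle once Corollary \ref{tr<M} is in hand: the only real content is the observation that total positivity upgrades the equation $\delta=\alpha+\beta$ into the inequalities $\sigma_i(\alpha)<\sigma_i(\delta)$, which is exactly what converts a bound on $\delta$ into a bound on the trace of each summand. The one point meriting a line of care is to confirm that ${\rm tr}(\delta)$ is a legitimate finite bound and that Corollary \ref{tr<M} already controls all totally positive integers over $K$ of trace at most $M$, so that no separate bound on $\deg(\alpha)$ is required.
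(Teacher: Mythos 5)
Your proof is correct and follows the same route the paper intends: Corollary \ref{d=a+b} is stated as an immediate consequence of Corollary \ref{tr<M}, exactly via the observation that total positivity of $\beta$ forces $0<{\rm tr}(\alpha)<{\rm tr}(\delta)$, so $\alpha$ lies in a finite set and $\beta=\delta-\alpha$ is determined. Nothing is missing; your extra care about the bound $M={\rm tr}(\delta)$ being a positive rational integer is fine but not essential.
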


\begin{cor}\label{finiteness}
For all $\delta\in\mathcal O^+$, the number $p(\delta)$ of partitions of $\delta$ in $\mathcal O^+$ is finite. Equivalently, for all $\delta\in\mathcal O^+$, there are finitely many finite sequences $\delta_1, \delta_2,\ldots,\delta_n$ in $\mathcal O^+$ such that $\delta=\delta_1+\delta_2+\cdots+\delta_n$.
\end{cor}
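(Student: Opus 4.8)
The plan is to use the trace as an additive, positive-integer-valued measure of size on $\mathcal O^+$, and then to bound simultaneously both the number of parts and the size of each part in any partition of $\delta$ in terms of ${\rm tr}(\delta)$.

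First I would record the two structural facts about the trace ${\rm tr}={\rm tr}_{K/\mathbb Q}$ restricted to $\mathcal O^+$. Since every $\alpha\in\mathcal O^+$ is an algebraic integer, ${\rm tr}(\alpha)\in\mathbb Z$; and since $K$ is totally real while $\alpha$ is totally positive, every real embedding satisfies $\sigma_i(\alpha)>0$, so ${\rm tr}(\alpha)=\sum_i\sigma_i(\alpha)>0$. Hence ${\rm tr}(\alpha)$ is a positive rational integer, and in particular ${\rm tr}(\alpha)\ge 1$ for every $\alpha\in\mathcal O^+$. The trace is of course additive.

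Next, given any partition $\delta=\delta_1+\delta_2+\cdots+\delta_n$ with each $\delta_i\in\mathcal O^+$, applying the trace yields ${\rm tr}(\delta)=\sum_{i=1}^n{\rm tr}(\delta_i)$. Set $M={\rm tr}(\delta)$. Because each summand satisfies ${\rm tr}(\delta_i)\ge 1$, the number of parts is bounded by $n\le M$; and because the remaining parts contribute positively to the sum, each individual part satisfies ${\rm tr}(\delta_i)\le M$. Thus every part of every partition of $\delta$ lies in the set $T$ of totally positive algebraic integers of trace at most $M$, and no partition of $\delta$ has more than $M$ parts.

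Finally I would invoke Corollary \ref{tr<M}, which asserts that $T$ is finite. A partition of $\delta$ is then a multiset of size at most $M$ drawn from the finite set $T$, and there are only finitely many such multisets, so $p(\delta)<\infty$. The equivalent statement about finite sequences follows immediately, since each ordered sequence of length at most $M$ with entries in $T$ lies among at most $\sum_{k=1}^{M}|T|^{k}$ possibilities. The argument is genuinely short once Corollary \ref{tr<M} is available, so the only point requiring care is the simultaneous control of the two a priori unbounded quantities—the number of parts and the magnitude of a single part—and I expect no further obstacle, since both are tamed by the single observation that the trace is additive and bounded below by $1$ on $\mathcal O^+$.
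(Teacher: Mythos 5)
Your proposal is correct and follows exactly the route the paper intends: the paper states Corollary \ref{finiteness} without explicit proof as an immediate consequence of Corollary \ref{tr<M}, and the implicit argument is precisely yours---trace is additive and at least $1$ on $\mathcal O^+$, so every part of a partition of $\delta$ lies in the finite set of totally positive integers of trace at most ${\rm tr}(\delta)$ and the number of parts is at most ${\rm tr}(\delta)$. Your write-up simply makes the paper's omitted deduction explicit, with no gaps.
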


\section{formal power $q$-sums and Infinite product generating functions of one variable over totally real number fields}
A large part of the partition theory has relied on the use of $q$-series. The following theorem is a classic result.

\begin{thm}[\cite{and11},Theorem 1.1]\label{''H''}
Let $H$ be a set of positive rational integers, and let
\begin{align*}
f(q)&=\sum_{n\ge0}p(''H'',n)q^n,\\
f_d(q)&=\sum_{n\ge0}p(''H''(\le d),n)q^n.
\end{align*}
Then for $|q|<1$,
\begin{align*}
f(q)&=\prod_{n\in H}\frac1{1-q^n},\\
f_d(q)&=\prod_{n\in H}(1+q^n+\cdots+q^{dn})\nonumber\\
&=\prod_{n\in H}\frac{1-q^{(d+1)n}}{1-q^n}.
\end{align*}
\end{thm}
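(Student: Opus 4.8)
The plan is to expand each infinite product as a power series in $q$ and to identify the coefficient of $q^N$ with the relevant partition count. I would begin with the first identity. For $|q|<1$ each factor $\frac{1}{1-q^n}$ is the absolutely convergent geometric series $\sum_{j\ge0}q^{nj}$, so formally
\[
\prod_{n\in H}\frac1{1-q^n}=\prod_{n\in H}\Big(\sum_{j_n\ge0}q^{nj_n}\Big),
\]
and multiplying out amounts to selecting, for each $n\in H$, a multiplicity $j_n\ge0$, contributing $q^{\sum_n nj_n}$. Collecting terms of equal total degree, the coefficient of $q^N$ is the number of tuples $(j_n)_{n\in H}$ with $\sum_n nj_n=N$. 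Such a tuple is exactly the datum of a partition of $N$ in which the part $n$ occurs $j_n$ times, that is, a partition in $''H''$; hence this coefficient is $p(''H'',N)$, which would give $f(q)$.

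To make the expansion rigorous I would first treat the case of finite $H$, where the product is a finite product of absolutely convergent series and may be multiplied out term by term with no convergence issue, yielding $\prod_{n\in H}\frac1{1-q^n}=\sum_{N\ge0}p(''H'',N)q^N$ directly. I would also record that each $p(''H'',N)$ is finite, since every part of a partition of $N$ is a positive integer at most $N$, so only finitely many tuples $(j_n)$ contribute.

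For infinite $H$ I would pass to a limit. Writing $H=\{n_1<n_2<\cdots\}$ and $H_m=\{n_1,\ldots,n_m\}$, the finite case gives $P_m(q)=\prod_{i=1}^m\frac1{1-q^{n_i}}=\sum_{N\ge0}p(''H_m'',N)q^N$. For $|q|<1$ the infinite product converges absolutely, because $\sum_{n\in H}|q|^n<\infty$, so $P_m(q)\to\prod_{n\in H}\frac1{1-q^n}$. On the coefficient side, for each fixed $N$ one has $p(''H_m'',N)=p(''H'',N)$ as soon as $n_m>N$, since every element of $H$ not exceeding $N$ already lies in $H_m$ and parts exceeding $N$ cannot occur in a partition of $N$. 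Thus each coefficient stabilizes, the limit power series is $\sum_{N\ge0}p(''H'',N)q^N$, and matching the two limits yields the first identity.

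The second identity is entirely parallel: replacing the geometric series by the finite truncation $\sum_{j=0}^{d}q^{nj}=\frac{1-q^{(d+1)n}}{1-q^n}$ restricts each multiplicity to $0\le j_n\le d$, so the coefficient of $q^N$ counts partitions of $N$ in $''H''$ with no part repeated more than $d$ times, namely $p(''H''(\le d),N)$. The main obstacle I anticipate is not the combinatorial bookkeeping but the analytic passage to the limit for infinite $H$ — specifically, interchanging $\lim_{m\to\infty}$ with extraction of the coefficient of $q^N$. I would justify this through the coefficient-stabilization observation together with the absolute, locally uniform convergence of the product on $|q|<1$, which legitimizes comparing the two analytic functions coefficient by coefficient.
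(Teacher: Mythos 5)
Your proof is correct, and its combinatorial core --- expanding each factor as a geometric series (truncated at $q^{dn}$ for the second identity) and reading off the coefficient of $q^N$ as the number of multiplicity tuples $(j_n)_{n\in H}$ with $\sum_{n\in H} n j_n=N$ --- is exactly the identification the paper uses. The genuine difference lies in how the infinite product is legitimized. The paper quotes this theorem from Andrews and carries out the expansion argument only in its formal setting, in the proof of Lemma \ref{hd}: there, infinite products of formal $q$-sums are \emph{defined} coefficient-by-coefficient, exploiting the fact that only finitely many factors can contribute to any fixed power $q^{\delta}$, so no limits and no convergence statements appear at all; the proof is pure bookkeeping with the coefficients $c_{\delta}$. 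You instead work analytically on $|q|<1$: finite truncations $H_m$, coefficient stabilization $p(''H_m'',N)=p(''H'',N)$ once $n_m>N$, and absolute, locally uniform convergence of the product to pass to the limit --- essentially Andrews' original argument, and the appropriate level of rigor for the statement as phrased with $|q|<1$. Both routes are sound, and it is worth seeing what each buys: your analytic argument is tied to convergence, which is precisely what fails over a totally real field $K\ne\mathbb Q$ (the paper notes that formal power sums there do not converge), so it has no counterpart in the generalized setting; conversely, the coefficient-stabilization phenomenon you observed is exactly what survives and is promoted to a definition in the paper's formal framework. One small point to make explicit in your write-up: to conclude that the limit function has Taylor coefficients $p(''H'',N)$, you should state that locally uniform convergence of analytic functions implies convergence of each Taylor coefficient (Cauchy estimates); with that sentence added, your passage to the limit is complete.
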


To generalize the previous theorem to totally real number fields, we define the $\textit{ring of}$ $\textit{formal}$ $\textit{power sums}$, or $\textit{the ring of formal q-sums}$. To study the partition theory of totally positive integers over totally real number field $K$, we modify the ring $\mathbb R[[q]]=\{a_0+a_1q+a_2q^2+\cdots|a_i\in\mathbb R\}$ of $q$-series as follows. Let $H\subset\mathcal O^+\cup\{0\}$. Assume that $0\in H$ and $H$ is closed under addition. We define the ring of formal power sums $\mathbb R[[q]]_H$ by
\begin{align*}
\mathbb R[[q]]_H=\left\{\sum_{\delta\in H}c_{\delta}q^{\delta}|c_{\delta}\in\mathbb R\right\}.
\end{align*}
The convergence of a formal power sum does not care because most of formal power sums does not converge except $K=\mathbb Q$. Let $f(q)=\sum_{\delta\in H}c_{\delta}q^{\delta}\in\mathbb R[[q]]_H$ and let $c(f(q),\delta)=c_{\delta}$ be the coefficient of $q^{\delta}$ in $f(q)$. The sum and product of two formal power $q$-sums are defined by
\begin{align*}
&\sum_{\delta\in H}c_{\delta}q^{\delta}+\sum_{\delta\in H}d_{\delta}q^{\delta}=\sum_{\delta\in H}(c_{\delta}+d_{\delta})q^{\delta},\\
&\left(\sum_{\delta\in H}c_{\delta}q^{\delta}\right)\left(\sum_{\delta\in H}d_{\delta}q^{\delta}\right)=\sum_{\gamma\in H}\left(\sum_{\gamma=\alpha+\beta}c_{\alpha}d_{\beta}\right)q^{\gamma}.
\end{align*}
Since $H\subset\mathcal O^+\cup\{0\}$ by Corollary \ref{d=a+b}, the product $f(q)g(q)$ is well-defined. If $\sum_{i\in I}a_{i,\delta}$ converges for all $\delta\in H$, then the infinite sum is defined by
\begin{align*}
&\sum_{i\in I}\left(\sum_{\delta\in H}a_{i,\delta}q^{\delta}\right)=\sum_{\delta\in H}\left(\sum_{i\in I}a_{i,\delta}\right)q^{\delta}.
\end{align*}
To define the infinite product $\prod_{i\in I}f_i(q)$ of $f_i(q)=\sum_{\delta\in H}a_{i,\delta}q^{\delta}$, we only need to define all the coefficeints $c_{\delta}$ of $q^{\delta}$ in 
\begin{align*}
\prod_{i\in I}f_i(q)=\sum_{\delta\in H}c_{\delta}q^{\delta}.
\end{align*}
If for each $\delta\in\mathcal O^+\cup\{0\}$ there is a finite set $I_{\delta}$ of $I$ such that the coefficeint of $q^{\delta}$ in $\prod_{i\in J}f_i(q)$ is the same to that of $\prod_{i\in I_{\delta}}f_i(q)$ for all $J$ with $i_{\delta}\subset J\subset I$, then each  $c_{\delta}$ is well-defined, and thus we can define $\prod_{i\in I}f_i(q)$. If there are finitely many $i\in I$ such taht $a_{i,0}\ne1$ and for each $\delta\in\mathcal O^+$ there are finitely many $i\in I$ such that $a_{i,\delta}\ne0$, then the above condition is satisfied. In this case, we can see that $c_{\delta}=\sum a_{i_1,\delta_1}a_{i_2,\delta_2}\cdots a_{i_n,\delta_n}$ is the sum of all the product $a_{i_1,\delta_1}a_{i_2,\delta_2}\cdots a_{i_n,\delta_n}$ such that $i_1, i_2,\ldots, i_n$ are distinct and $\delta=\delta_1+\delta_2+\cdots+\delta_n$. Suppose $f_i(q)=\sum_{\delta\in H}a_{i,\delta}q^{\delta}$ and $g_i(q)=\sum_{\delta\in H}b_{i,\delta}q^{\delta}$ satisfies the above condition. Then the product of two infinte products $\prod_{i\in I}f_i(q)$ and $\prod_{i\in I}g_i(q)$ is 
\begin{align*}
\left(\prod_{i\in I}f_i(q)\right)\left(\prod_{i\in I}g_i(q)\right)=\sum_{\delta\in H}c_{\delta}q^{\delta}
\end{align*}
where $c_{\delta}=\sum a_{i_1,\zeta_1}a_{i_2,\zeta_2}\cdots a_{i_k,\zeta_k}b_{j_1,\eta_1}b_{i_j,\eta_2}\cdots b_{j_l,\eta_l}$ is the sum over all $i_1,i_2,\ldots,i_k,j_1,j_2,\ldots, j_l\in I$ and nonzero $\zeta_1,\zeta_2,\ldots,\zeta_k, \eta_1,\eta_2,\ldots,\eta_l\in H$ such that $i_1,i_2,\ldots, i_k$ are distinct, $j_1,j_2,\ldots, j_l$ are distinct, $\delta=\zeta_1+\zeta_2+\cdots+\zeta_k+\eta_1+\eta_2+\cdots+\eta_l$, $a_{i,0}=1$ for $i\ne i_t$ and $b_{j,0}=1$ for $j\ne j_s$. We can see that $c_{\delta}$ is equal to the coefficient $d_{\delta}$ of $q^{\delta}$ in 
\begin{align*}
\prod_{i\in I}f_i(q)g_i(q)=\prod_{i\in I}\left(\sum_{\gamma=\alpha+\beta}a_{i,\alpha}b_{i,\beta}\right)q^{\gamma}=\sum_{\delta\in H}d_{\delta}q^{\delta}.
\end{align*}
Thus we have 
\begin{align*}
\prod_{i\in I}f_i(q)\prod_{i\in I}g_i(q)=\prod_{i\in I}f_i(q)g_i(q).
\end{align*}
In particular 
\begin{align*}
\prod_{\delta\in H}(1-q^{\delta})\prod_{\delta\in H}(1+q^{\delta}+q^{2\delta}+\cdots)=\prod_{\delta\in H}1=1.
\end{align*}
As a result we can use a convenient notation
\begin{align*}
\prod_{\delta\in H}\frac1{1-q^{\delta}}=\prod_{\delta\in H}(1+q^{\delta}+q^{2\delta}+\cdots)\in\mathbb R[[q]]_H.
\end{align*}
If $H$ is not closed under addition, then $\mathbb R[[q]]_H=\{\sum_{\delta\in H}a_{\delta}q^{\delta}|a_{\delta}\in\mathbb R\}$ is an abelian group under addition and not a ring. Products and infinte products of the elements of  $\mathbb R[[q]]_H$ belong to  $\mathbb R[[q]]_{\mathcal O^+\cup\{0\}}$.

The study of formal $q$-sums is closely related to the theory of partitions over $K$. In fact, we can think of a formal $q$-sum as a $q$-series over $K$.

The following lemma is a generalization of Theorem \ref{''H''} to totally real number fields. 
\begin{lem}\label{hd}
Let $H$ be a set of totally positive integers over $K$, and let
\begin{align*}
f(q)&=\sum_{\delta\in\mathcal{O}^+\cup\{0\}}p(''H'',\delta)q^{\delta}
\end{align*}
and
\begin{align*}
f_d(q)&=\sum_{\delta\in\mathcal{O}^+\cup\{0\}}p(''H''(\le d),\delta)q^{\delta}.
\end{align*}
Then 
\begin{align*}
f(q)&=\prod_{\delta\in H}\frac1{1-q^{\delta}}
\end{align*}
and
\begin{align*}
f_d(q)&=\prod_{\delta\in H}(1+q^{\delta}+\cdots+q^{d\delta})\\
&=\prod_{\delta\in H}\frac{1-q^{(d+1)\delta}}{1-q^{\delta}}.
\end{align*}
\end{lem}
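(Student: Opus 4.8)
The plan is to read both identities as equalities in the ring $\mathbb{R}[[q]]_{\mathcal{O}^+\cup\{0\}}$ of formal $q$-sums, with the convention $\frac1{1-q^\delta}=1+q^\delta+q^{2\delta}+\cdots$ introduced above, and to prove each by comparing the coefficient of a fixed $q^\gamma$ on the two sides. The argument splits into two parts: first, checking that the infinite products $\prod_{\delta\in H}\frac1{1-q^\delta}$ and $\prod_{\delta\in H}(1+q^\delta+\cdots+q^{d\delta})$ are genuinely well-defined elements of $\mathbb{R}[[q]]_{\mathcal{O}^+\cup\{0\}}$; and second, identifying the resulting coefficients with the two partition numbers by a bijection. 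I would treat $f(q)$ and $f_d(q)$ in parallel, since the only difference is whether a part may be repeated arbitrarily often or at most $d$ times.

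For well-definedness I would invoke the sufficient criterion established above: an infinite product $\prod_i f_i(q)$ is defined once all but finitely many factors have constant term $1$ and, for each $\gamma\in\mathcal{O}^+$, only finitely many factors contribute a nonzero coefficient to $q^\gamma$. Here the index set is $H$ and the factor attached to $\delta_0\in H$ is $1+q^{\delta_0}+q^{2\delta_0}+\cdots$ (resp. its truncation $1+q^{\delta_0}+\cdots+q^{d\delta_0}$); every such factor has constant term $1$, so the first condition holds vacuously. For the second, the coefficient of $q^\gamma$ in the $\delta_0$-factor is nonzero only when $\gamma=m\delta_0$ for some integer $m\ge1$, and then ${\rm tr}(\delta_0)={\rm tr}(\gamma)/m\le{\rm tr}(\gamma)$; by Corollary~\ref{tr<M} there are only finitely many totally positive integers of trace at most ${\rm tr}(\gamma)$, hence only finitely many admissible $\delta_0$. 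Thus both products lie in $\mathbb{R}[[q]]_{\mathcal{O}^+\cup\{0\}}$.

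Next I would compute the coefficient $c_\gamma$ of $q^\gamma$ using the explicit expansion of an infinite product recorded above. Since in each factor the nonzero coefficients are all equal to $1$ and occur exactly at the monomials $q^{m\delta_0}$ (with $m\ge1$, resp. $1\le m\le d$), the coefficient $c_\gamma$ equals the number of ways to write $\gamma=m_1\delta_1+\cdots+m_n\delta_n$ with $\delta_1,\dots,\delta_n$ distinct elements of $H$ and each $m_j\ge1$ (resp. $1\le m_j\le d$). Recording each distinct part $\delta_j$ together with its multiplicity $m_j$ is precisely the data of a partition of $\gamma$ whose parts lie in $H$ (resp. a partition of $\gamma$ in $''H''$ in which no part is repeated more than $d$ times), so this count is exactly $p(''H'',\gamma)$ (resp. $p(''H''(\le d),\gamma)$). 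Comparing with the definitions of $f(q)$ and $f_d(q)$ yields the two product formulas. The closed form $\prod_{\delta\in H}\frac{1-q^{(d+1)\delta}}{1-q^\delta}$ is then just a restatement: factorwise $\frac{1-q^{(d+1)\delta}}{1-q^\delta}=1+q^\delta+\cdots+q^{d\delta}$ by the finite geometric identity, while the rule $\prod_i f_i(q)\,\prod_i g_i(q)=\prod_i f_i(q)g_i(q)$ proved above legitimizes splitting it as $\prod_{\delta\in H}(1-q^{(d+1)\delta})\cdot\prod_{\delta\in H}\frac1{1-q^\delta}$ if one wishes.

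I expect the only genuinely delicate point to be the well-definedness of the infinite products; the combinatorial bijection between product expansions and partitions is formally identical to the classical proof of Theorem~\ref{''H''}. The role of the number field enters solely through Corollary~\ref{tr<M}: it is what guarantees that a fixed $\gamma$ has only finitely many ``divisors'' $\delta_0\in H$, so that each $c_\gamma$ is a finite sum. Without such a finiteness input $\prod_{\delta\in H}\frac1{1-q^\delta}$ could fail to define an element of $\mathbb{R}[[q]]_{\mathcal{O}^+\cup\{0\}}$, so I would state the trace bound carefully and note that the same bound simultaneously handles the $f_d$ case.
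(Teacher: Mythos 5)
Your proposal is correct and takes essentially the same route as the paper: expand each factor as a (truncated) geometric sum and identify the coefficient of $q^{\gamma}$ in the product with the number of ways to write $\gamma=m_1\delta_1+\cdots+m_n\delta_n$ with distinct $\delta_j\in H$ and multiplicities $m_j\ge1$ (resp.\ $1\le m_j\le d$), which is exactly $p(''H'',\gamma)$ (resp.\ $p(''H''(\le d),\gamma)$). The only difference is that you explicitly verify well-definedness of the infinite products via the trace bound of Corollary~\ref{tr<M}, a point the paper leaves implicit in its general criterion for infinite products of formal $q$-sums; this is a welcome addition but not a different method.
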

The proof of Lemma \ref{hd} is the same as that of Theorem \ref{''H''} execpt that formal $q$-sums are used instead of $q$-series.

\section{ideal partitions}
The Glaisher Theorem can be differently stated that for $H=\mathbb Z^+\setminus(d)$, $p(''H'',n)=p(''{\mathbb Z^+}''(\le d-1),n)$ for all $n\in\mathbb Z^+$. Let $K$ be a totally real number field. We are interested in $p(''H'',\delta)$ for $H=\mathcal O^+\setminus\mathfrak a$ where $\mathfrak a$ is an ideal of $\mathcal O=\mathcal O_K$ as a generalization of Glaisher Theorem. It is equivalent to studing the generating function \[\prod_{\delta\in\mathcal O^+, \delta\notin\mathfrak a}\frac1{1-q^{\delta}}.\] In each infinite product given in this section, we assume that $\delta\in\mathcal O^+$. For example, $\delta\notin\mathfrak a$ means $\delta\in\mathcal O^+$ and $\delta\notin\mathfrak a$, and $\delta\in\mathfrak a$ means $\delta\in\mathcal O^+\cap\mathfrak a$. Thus the product above is written as \[\prod_{ \delta\notin\mathfrak a}\frac1{1-q^{\delta}}.\] 

Let $\mathfrak a$ be an ideal of $\mathcal O$. Choose an integer $d\in\mathbb Z^+\cap\mathfrak a$, then $(d)=\mathfrak p_1^{e_1}\mathfrak p_2^{e_2}\cdots\mathfrak p_k^{e_k}$ for some distinct prime ideals $\mathfrak p_1,\mathfrak p_2,\ldots,\mathfrak p_k$ of $\mathcal O$. We can see $\mathfrak a=\mathfrak p_1^{e_1'}\mathfrak p_2^{e_2'}\cdots\mathfrak p_k^{e_k'}$ such that $0\le e_i'\le e_i$ for all $i$. Let $\mathfrak p$ be a prime ideal of $\mathcal O$. For $\delta\in\mathcal O^*$, the $\mathfrak p$$\textit{-adic order}$ $\nu_{\mathfrak p}(\delta)$ of $\delta$ is defined by $\nu_{\mathfrak p}(\delta)=k$ if and only if $\delta\in\mathfrak p^k$ and $\delta\notin\mathfrak p^{k+1}$.

\begin{thm}
Let $\mathfrak a, d,\mathfrak p_i,e_i,e_i'$ be the same as above. Let $S=\mathcal O^+\setminus\cup_{j=0}^{\infty}\left(\mathfrak a(d^j)\setminus (d^{j+1})\right)$. Then 
\begin{align*}
\prod_{\delta\notin\mathfrak a}\frac1{1-q^{\delta}}=\prod_{\delta\in S}(1+q^{\delta}+q^{2\delta}+\cdots+q^{(d-1)\delta}).
\end{align*}
Thus, we have $p(''\mathcal O^+\setminus\mathfrak a'',\delta)=p(''S''(\le d-1),\delta)$ for all $\delta\in\mathcal O^+$.
\end{thm}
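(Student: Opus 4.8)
The plan is to reduce the stated identity to a single combinatorial decomposition of the index set $S$ and then read off the partition-number equality from Lemma \ref{hd}. The engine is the elementary identity $1+q^\delta+\cdots+q^{(d-1)\delta}=\frac{1-q^{d\delta}}{1-q^\delta}$, valid in $\mathbb R[[q]]_{\mathcal O^+\cup\{0\}}$, together with the multiplicativity rule $\prod_i f_i(q)\prod_i g_i(q)=\prod_i f_i(q)g_i(q)$ and the telescoping relation $\prod(1-q^\delta)\prod\frac1{1-q^\delta}=1$ established in Section 3. Throughout, the well-definedness of the infinite products as formal $q$-sums is guaranteed by the finiteness results of Section 2 (Corollaries \ref{d=a+b} and \ref{finiteness}), so all of the manipulations below are legitimate in $\mathbb R[[q]]_{\mathcal O^+\cup\{0\}}$.

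First I would isolate the key claim: writing $dS=\{d\delta:\delta\in S\}$, there is a \emph{disjoint} decomposition
\[S=(\mathcal O^+\setminus\mathfrak a)\sqcup dS.\]
Granting this, the identity follows at once. Since $\delta\mapsto d\delta$ is a bijection $S\to dS$, the decomposition factors
\[\prod_{\delta\in S}(1-q^\delta)=\prod_{\delta\notin\mathfrak a}(1-q^\delta)\cdot\prod_{\delta\in dS}(1-q^\delta)=\prod_{\delta\notin\mathfrak a}(1-q^\delta)\cdot\prod_{\delta\in S}(1-q^{d\delta}),\]
and therefore
\[\prod_{\delta\in S}\bigl(1+q^\delta+\cdots+q^{(d-1)\delta}\bigr)=\prod_{\delta\in S}(1-q^{d\delta})\cdot\prod_{\delta\in S}\frac1{1-q^\delta}=\prod_{\delta\in S}(1-q^{d\delta})\cdot\Bigl(\prod_{\delta\notin\mathfrak a}(1-q^\delta)\cdot\prod_{\delta\in S}(1-q^{d\delta})\Bigr)^{-1}.\]
The factor $\prod_{\delta\in S}(1-q^{d\delta})$ cancels, leaving $\prod_{\delta\notin\mathfrak a}\frac1{1-q^\delta}$, which is the desired identity. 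Finally, reading the coefficient of $q^\delta$ on both sides through Lemma \ref{hd}---the left side is $f(q)$ for $H=\mathcal O^+\setminus\mathfrak a$ and the right side is $f_{d-1}(q)$ for the set $S$---yields $p(''\mathcal O^+\setminus\mathfrak a'',\delta)=p(''S''(\le d-1),\delta)$.

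The heart of the proof, and the step I expect to be the main obstacle, is the decomposition $S=(\mathcal O^+\setminus\mathfrak a)\sqcup dS$. I would prove it by tracking $\mathfrak p_i$-adic orders: for $\delta\in\mathcal O^+$ set $v_i=\nu_{\mathfrak p_i}(\delta)$, so that $\delta\in\mathfrak a(d^j)$ iff $v_i\ge e_i'+je_i$ for all $i$, and $\delta\in(d^{j+1})$ iff $v_i\ge(j+1)e_i$ for all $i$. Disjointness is clear since $dS\subseteq(d)\subseteq\mathfrak a$. Writing $A_j=\mathfrak a(d^j)\setminus(d^{j+1})$, the three inclusions go as follows: (i) if $\delta\notin\mathfrak a$ then some $v_{i_0}<e_{i_0}'\le e_{i_0}'+je_{i_0}$, so $\delta$ lies in no $A_j$ and hence $\delta\in S$; (ii) if $\delta\in S$ and $d\delta\in A_j$, then $j\ge1$ (as $d\delta$ always lies in $(d)$, so cannot lie in $A_0=\mathfrak a\setminus(d)$), and shifting the index down by one shows $\delta\in A_{j-1}$, a contradiction, so $dS\subseteq S$; (iii) if $\delta\in S\cap\mathfrak a$, then $\delta\notin A_0$ forces $\delta\in(d)$, so $\gamma=\delta/d\in\mathcal O^+$, and a symmetric index shift up by one shows $\gamma\in S$, whence $\delta=d\gamma\in dS$. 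The care needed here is to handle the quantifier structure of $A_j$ correctly---membership in $\mathfrak a(d^j)$ is a condition on \emph{all} $i$, while non-membership in $(d^{j+1})$ is a condition on \emph{some} $i$---and to confirm that dividing by the rational integer $d$ preserves both integrality and total positivity, which it does.
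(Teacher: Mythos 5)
Your proof is correct, and it takes a genuinely different route from the paper's. The paper argues by a Glaisher-style manipulation of infinite products: it splits $\prod_{\delta\notin\mathfrak a}\frac1{1-q^{\delta}}=\prod_{\delta\notin(d)}\frac1{1-q^{\delta}}\prod_{\delta\in\mathfrak a\setminus(d)}(1-q^{\delta})$, rewrites the first factor as $\prod_{\delta\in\mathcal O^+}\frac{1-q^{d\delta}}{1-q^{\delta}}$, expands each factor $1-q^{\delta}$ of the second product as the infinite telescoping product $\frac{1-q^{\delta}}{1-q^{d\delta}}\cdot\frac{1-q^{d\delta}}{1-q^{d^2\delta}}\cdots$, and reindexes the $n$-th telescoping level via the bijection $\delta\mapsto d^n\delta$ from $\mathfrak a\setminus(d)$ onto $\mathfrak a(d^n)\setminus(d^{n+1})$, so that after cancellation exactly the factors indexed by $S$ survive. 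You instead isolate the whole combinatorial content in the structural lemma $S=(\mathcal O^+\setminus\mathfrak a)\sqcup dS$, proved by tracking $\mathfrak p_i$-adic valuations, and then finish with a short algebraic cancellation; your three inclusions (including the index shifts $d\delta\in A_j\Leftrightarrow\delta\in A_{j-1}$ for $j\ge1$ and $\gamma\in A_j\Leftrightarrow d\gamma\in A_{j+1}$ for your sets $A_j=\mathfrak a(d^j)\setminus(d^{j+1})$, and the observation that $\delta/d$ remains a totally positive integer) are all verified correctly, and your final passage from the product identity to $p(''\mathcal O^+\setminus\mathfrak a'',\delta)=p(''S''(\le d-1),\delta)$ via Lemma \ref{hd} coincides with the paper's last step. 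Both arguments rest on the same self-similarity of $S$ under multiplication by $d$ --- your lemma iterates to $S=\bigsqcup_{n\ge0}d^n(\mathcal O^+\setminus\mathfrak a)$, which is precisely the structure the paper exploits level by level --- but your organization buys a cleaner separation of concerns: the set identity carries all the arithmetic, and the formal $q$-sum manipulation reduces to finitely many applications of the Section 3 rules (factoring a product over a disjoint union of index sets, reindexing by a bijection, and multiplying by explicit inverses such as $\prod_{\delta\in S}(1+q^{d\delta}+q^{2d\delta}+\cdots)$), whereas the paper must additionally justify genuinely infinite telescoping products sitting inside each factor of an infinite product. What the paper's computation buys in exchange is fidelity to the classical proof of Glaisher's theorem, so the generalization is transparent to a reader who knows that argument.
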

\begin{proof}
Let $S_n=\mathfrak a(d^{n-1})\setminus(d^n)$ for $n\in\mathbb Z^+$. Then $S=\mathcal O^+\setminus\cup_{n=0}^{\infty}S_n$.
By Lemma \ref{hd}, we have
\begin{align*}
\prod_{\delta\notin\mathfrak a}\frac1{1-q^{\delta}}&=\prod_{\delta\notin(d)}\frac1{1-q^{\delta}}\prod_{\delta\in\mathfrak a\setminus(d)}(1-q^{\delta})\\
&=\left(\prod_{\delta\in\mathcal O^+}\frac{1-q^{d\delta}}{1-q^{\delta}}\right)\prod_{\delta\in\mathfrak a\setminus(d)}\left(\frac{1-q^{\delta}}{1-q^{d\delta}}\frac{1-q^{d\delta}}{1-q^{d^2\delta}}\frac{1-q^{d^2\delta}}{1-q^{d^3\delta}}\cdots\right)\\
&=\left(\prod_{\delta\in\mathcal O^+}\frac{1-q^{d\delta}}{1-q^{\delta}}\right)\left(\prod_{\delta\in\mathfrak a\setminus(d)}\frac{1-q^{\delta}}{1-q^{d\delta}}\right)\left(\prod_{\delta\in\mathfrak a\setminus(d)}\frac{1-q^{d\delta}}{1-q^{d^2\delta}}\right)\left(\prod_{\delta\in\mathfrak a\setminus(d)}\frac{1-q^{d^2\delta}}{1-q^{d^3\delta}}\right)\cdots\\
&=\left(\prod_{\delta\in\mathcal O^+}\frac{1-q^{d\delta}}{1-q^{\delta}}\right)\left(\prod_{\delta\in S_1}\frac{1-q^{\delta}}{1-q^{d\delta}}\right)\left(\prod_{\delta\in S_2}\frac{1-q^{\delta}}{1-q^{d\delta}}\right)\left(\prod_{\delta\in S_3}\frac{1-q^{\delta}}{1-q^{d\delta}}\right)\cdots\\
&=\prod_{\delta\in\mathcal O^+}\left(\sum_{i=0}^{d-1}q^{i\delta}\right)\prod_{\delta\in\cup_{n=1}^{\infty}S_n}\left(\sum_{i=0}^{d-1}q^{i\delta}\right)^{-1}\\
&=\prod_{\delta\in S}\left(\sum_{i=0}^{d-1}q^{i\delta}\right).
\end{align*}
Note that 
\begin{align*}
\prod_{\delta\in\mathfrak a\setminus(d)}\frac{1-q^{d^n\delta}}{1-q^{d^{n+1}\delta}}=\prod_{\delta\in\mathfrak a(d^n)\setminus(d^{n+1})}\frac{1-q^{\delta}}{1-q^{d\delta}}=\prod_{\delta\in S_n}\frac{1-q^{\delta}}{1-q^{d\delta}}.
\end{align*}
Since $\sum_{\delta\in\mathcal O^+}p(''\mathcal O^+\setminus\mathfrak a'',\delta)q^{\delta}=\prod_{\delta\notin\mathfrak a}\frac1{1-q^{\delta}}$ and $\sum_{\delta\in\mathcal O^+}p(''S''(\le d-1),\delta)q^{\delta}=\prod_{\delta\in S}(\sum_{i=0}^{d-1}q^{i\delta})$, we have $p(''\mathcal O^+\setminus\mathfrak a'',\delta)=p(''S''(\le d-1),\delta)$ for all $\delta\in\mathcal O^+$.
\end{proof}

\begin{ex}
Let $K=\mathbb Q(\sqrt2)$, $\mathfrak a=(\sqrt2)$ and $d=2$. Note that $\mathfrak a$ is a prime ideal. Then we have
\begin{align*}
S&=\mathcal O^+\setminus\cup_{j=0}^{\infty}\left((\sqrt2)(2^j)\setminus (2^{j+1})\right)\\
&=\left(\mathcal O^+\setminus(\sqrt2)\right)\cup\left((2)\setminus(2\cdot\sqrt2)\right)\cup\left((2^2)\setminus(2^2\cdot\sqrt2)\right)\cup\cdots\\
&=\mathcal O^+\setminus\left\{\delta\in\mathcal O^+|\nu_{(\sqrt2)}(\delta)~{\rm is~odd}\right\}\\
&=\left\{\delta\in\mathcal O^+|\nu_{(\sqrt2)}(\delta)~{\rm is~even}\right\}.
\end{align*} 
The following list presents $p(6+2\sqrt2)$, $p(''{\mathcal O^+}\setminus(\sqrt2)'',6+2\sqrt2)$, $p(''S''(\le1),6+2\sqrt2)$ and tabulates the actual partitions. 
\begin{align*}
&p(6+2\sqrt2)=12:&&(6+2\sqrt2),~(5+2\sqrt2,1),~(4+2\sqrt2,2),~(4+2\sqrt2,1,1),\\
&&&(3+2\sqrt2,3),~(3+2\sqrt2,2,1),~(3+2\sqrt2,1,1,1),\\
&&&(4+\sqrt2,2+\sqrt2),~(3+\sqrt2,3+\sqrt2),~(3+\sqrt2,2+\sqrt2,1),\\
&&&(2+\sqrt2,2+\sqrt2,2),~(2+\sqrt2,2+\sqrt2,1,1);\\
&p(''{\mathcal O^+}\setminus(\sqrt2)'',6+2\sqrt2)=4:&&(5+2\sqrt2,1),~(3+2\sqrt2,3),~(3+2\sqrt2,1,1,1),\\
&&&(3+\sqrt2,3+\sqrt2);\\
&p(''S''(\le1),6+2\sqrt2)=4:&&(6+2\sqrt2),~(5+2\sqrt2,1),~(3+2\sqrt2,3),~(3+2\sqrt2,2,1).
\end{align*}
\end{ex}

\begin{ex}
Let $K=\mathbb Q(\sqrt2)$, $\mathfrak a=(3\sqrt2)$ and $d=6$. Note that $(3\sqrt2)=(3)(\sqrt2)$ is the factorization. Then we have
\begin{align*}
S&=\mathcal O^+\setminus\cup_{j=0}^{\infty}\left((3\sqrt2)(6^j)\setminus (6^{j+1})\right)\\
&=\left(\mathcal O^+\setminus(3\sqrt2)\right)\cup\left((6)\setminus(6\cdot3\sqrt2)\right)\cup\left((6^2)\setminus(6^2\cdot3\sqrt2)\right)\cup\cdots\\
&=\mathcal O^+\setminus\left\{\delta\in\mathcal O^+|\nu_{(\sqrt2)}(\delta)~{\rm is~odd},~ \nu_{(3)}(\delta)\ge\frac{\nu_{(\sqrt2)}(\delta)+1}2\right\}\\
&=\left\{\delta\in\mathcal O^+|\nu_{(\sqrt2)}(\delta)~{\rm is~even~or~} \nu_{(3)}(\delta)<\frac{\nu_{(\sqrt2)}(\delta)+1}2\right\}.
\end{align*}
\end{ex}

Let ${\mathcal{O}^+}_{d}$ denote the set of all totally positive integers not divisible by $d$. If $\mathfrak a=(d)$, then $\mathcal O^+\setminus\mathfrak a={\mathcal{O}^+}_{d}$ and $S={\mathcal{O}^+}$. Thus we have the following corollary.

\begin{cor}\label{glg}
For all $\delta\in\mathcal{O}^+$ and $d\in\mathbb Z^+$, we have $p({''{{\mathcal{O}^+}_{d}}''},\delta)=p(''{\mathcal{O}^+}''(\le d-1),\delta)$.
\end{cor}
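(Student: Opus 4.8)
The plan is to specialize the preceding theorem to the principal ideal $\mathfrak a=(d)$ and to check that under this choice the two sets appearing in its statement collapse to the ones claimed in the corollary. First I would observe that a totally positive integer $\delta$ fails to lie in $\mathfrak a=(d)$ precisely when $\delta$ is not divisible by $d$, so that $\mathcal O^+\setminus\mathfrak a$ is by definition the set ${\mathcal{O}^+}_d$. This already identifies the left-hand partition number $p(''\mathcal O^+\setminus\mathfrak a'',\delta)$ with $p(''{\mathcal{O}^+}_d'',\delta)$.

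The second step is to compute the exceptional set $S=\mathcal O^+\setminus\bigcup_{j=0}^{\infty}\bigl(\mathfrak a(d^j)\setminus(d^{j+1})\bigr)$ in this case. Since $\mathfrak a=(d)$ is principal, the product ideal satisfies $\mathfrak a(d^j)=(d)(d^j)=(d^{j+1})$, so each difference $\mathfrak a(d^j)\setminus(d^{j+1})=(d^{j+1})\setminus(d^{j+1})$ is empty. Hence the union over $j$ is empty and $S=\mathcal O^+$, which gives $p(''S''(\le d-1),\delta)=p(''\mathcal O^+''(\le d-1),\delta)$.

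Combining these two identifications with the conclusion of the preceding theorem yields $p(''{\mathcal{O}^+}_d'',\delta)=p(''\mathcal O^+''(\le d-1),\delta)$ for every $\delta\in\mathcal O^+$, which is exactly the assertion and recovers the Glaisher identity in its original shape. I do not expect any genuine obstacle here: the whole content is the ideal-theoretic simplification $\mathfrak a(d^j)=(d^{j+1})$, which is immediate from the principality of $(d)$, so the corollary is really just the cleanest instance of the general theorem.
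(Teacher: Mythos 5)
Your proposal is correct and matches the paper's own argument: the paper likewise obtains this corollary by specializing the preceding theorem to $\mathfrak a=(d)$, noting $\mathcal O^+\setminus(d)={\mathcal{O}^+}_d$ and $S=\mathcal O^+$. Your explicit verification that $\mathfrak a(d^j)=(d^{j+1})$ forces each set $\mathfrak a(d^j)\setminus(d^{j+1})$ to be empty is exactly the (unstated) computation behind the paper's claim that $S=\mathcal O^+$.
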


Corollary \ref{glg} is a generalization of the Glaisher Theorem to totally real number fields. If $d=2$, then this corollary generalizes the Euler Theorem. 

\begin{ex}
Consider the partitions of $7+4\sqrt2$ over $\mathbb Q(\sqrt2)$. The following list presents $p(7+4\sqrt2)$, $p(''{\mathcal O^+}_2'',7+4\sqrt2)$, $p(''{\mathcal{O}^+}''(\le1),7+4\sqrt2)$ and tabulates the actual partitions. 
\begin{align*}
&p(7+4\sqrt2)=6:&&(7+4\sqrt2),~(6+4\sqrt2,1),~(5+3\sqrt2,2+\sqrt2),\\
&&&(4+2\sqrt2,3+2\sqrt2),~(3+2\sqrt2,3+2\sqrt2,1),\\
&&&(3+2\sqrt2,2+\sqrt2,2+\sqrt2),\\
&p(''{\mathcal O^+}_2'',7+4\sqrt2)=4:&&(7+4\sqrt2),~(5+3\sqrt2,2+\sqrt2),~(3+2\sqrt2,3+2\sqrt2,1),\\
&&&(3+2\sqrt2,2+\sqrt2,2+\sqrt2),\\
&p(''{\mathcal{O}^+}''(\le1),7+4\sqrt2)=4:&&(7+4\sqrt2),~(6+4\sqrt2,1),~(5+3\sqrt2,2+\sqrt2),\\
&&&(4+2\sqrt2,3+2\sqrt2).
\end{align*}
\end{ex}

\section{chain partitions}
The difference between two distinct parts of a partition of a natural number is also a natural number or $0$. However, in totally real number fields, the difference between some two parts of a partition of a totally positive algebraic integer may be neither totally positive nor $0$. We use the notation $\alpha\succeq0$ if and only if $\alpha$ is a totally positive integer or $0$. A partition $\lambda=(\lambda_1,\lambda_2,\ldots,\lambda_r)$ of a totally positive algebraic integer $\delta$ over $K$ is a $\textit chain~partition$ if and only if $\lambda_{i+1}-\lambda_i\succeq0$ for all $i=1,2,\ldots, r-1$. Let $p(\succeq,\delta)$ be the number of chain partitions of $\delta$. Here, we define $p(\succeq,m,\delta)$ as the number of chain partitions of $\delta$ with at most $m$ parts. 

\begin{thm}
The number of chain partitions $p(\succeq,m,\delta)$ of $\delta$ with at most $m$ parts is equal to that of solutions to $\delta=x_1+2x_2+\cdots+mx_m$ with $x_i\succeq0$ for all $m\in\mathbb Z^+$. As a consequence we have 
\begin{align*}
\sum_{\delta\in\mathcal O^+\cup\{0\}}p(\succeq,\delta)q^{\delta}=\prod_{m=1}^{\infty}\left(\sum_{\alpha\in\mathcal O^+\cup\{0\}}q^{m\alpha}\right).
\end{align*}
\end{thm}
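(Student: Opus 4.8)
The plan is to establish the equality $p(\succeq,m,\delta)=\#\{(x_1,\dots,x_m):x_i\succeq0,\ \delta=x_1+2x_2+\cdots+mx_m\}$ by an explicit bijection of the classical type, and then to deduce the generating function identity by letting $m\to\infty$ while controlling the tail with the trace. First I would note that the condition $\lambda_{i+1}-\lambda_i\succeq0$ forces the parts of a chain partition to be pairwise comparable under $\succeq$: it gives $\lambda_1\preceq\cdots\preceq\lambda_r$, and conversely any multiset of pairwise comparable totally positive integers can be sorted into such a chain. Hence a chain partition of $\delta$ with at most $m$ parts is the same datum as a weakly decreasing chain $a_1\succeq a_2\succeq\cdots\succeq a_m\succeq0$ with $\sum_{i=1}^m a_i=\delta$, obtained by sorting the parts in decreasing order and padding with zeros. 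This is a bijection: the sorted sequence of a multiset is unique, and since $a_i=0$ forces $a_{i+1}=0$ the padding zeros occupy exactly the final slots, so the nonzero $a_i$ recover the parts.

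Next I would set $x_j=a_j-a_{j+1}$ for $1\le j\le m$, using the convention $a_{m+1}=0$. The chain condition gives $x_j\succeq0$, and a telescoping computation yields
\[
\sum_{j=1}^m j\,x_j=\sum_{j=1}^m j\,(a_j-a_{j+1})=\sum_{i=1}^m a_i=\delta.
\]
The inverse map sends a solution $(x_1,\dots,x_m)$ of $\delta=\sum_j jx_j$ with all $x_j\succeq0$ to the sequence $a_i=\sum_{j\ge i}x_j$, which is weakly decreasing with $a_i-a_{i+1}=x_i\succeq0$ and total $\delta$. These two maps are mutually inverse, which proves the stated identity for every $m$.

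For the generating function, I would first check that $\prod_{m=1}^\infty\big(\sum_{\alpha\in\mathcal O^+\cup\{0\}}q^{m\alpha}\big)$ is a well-defined formal $q$-sum: each factor has constant term $1$ (the $\alpha=0$ term), and for fixed $\delta\in\mathcal O^+$ a factor has a nonzero $q^\delta$-coefficient only if $\delta=m\alpha$ with $\alpha\in\mathcal O^+$, which forces $m\le{\rm tr}(\delta)$ because ${\rm tr}(\alpha)\ge1$. Thus only finitely many factors contribute, so the criterion for infinite products from Section 3 applies and the coefficient of $q^\delta$ equals the number of solutions of $\delta=\sum_{m\ge1}mx_m$ with $x_m\succeq0$ and finitely many nonzero. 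The same trace bound shows each such solution has $x_m=0$ for $m>{\rm tr}(\delta)$, hence is a solution of $\delta=\sum_{m=1}^{M}mx_m$ with $M={\rm tr}(\delta)$; by the identity just proved these are counted by $p(\succeq,M,\delta)$. Since every part of a partition of $\delta$ has trace at least $1$ and the traces sum to ${\rm tr}(\delta)$, every chain partition of $\delta$ has at most ${\rm tr}(\delta)=M$ parts, so $p(\succeq,M,\delta)=p(\succeq,\delta)$, and comparing coefficients gives the asserted identity.

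The step I expect to require the most care is the first one: one must verify that passing from the total order on $\mathbb Z^+$ to the partial order $\succeq$ on $\mathcal O^+$ does not break the classical bijection, that is, that the chain condition is exactly what guarantees a unique weakly decreasing arrangement of the parts and that padding by zeros behaves correctly. Once this structural point is settled, the telescoping identity and the trace-based stabilization argument are routine.
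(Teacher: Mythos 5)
Your proof is correct and follows essentially the same route as the paper's: your telescoping difference bijection between chain partitions (sorted into a weakly monotone sequence padded with zeros) and solutions of $\delta=x_1+2x_2+\cdots+mx_m$ is exactly the paper's pair of mutually inverse maps $\phi,\psi$, written with the opposite ordering convention. Your trace-bound argument justifying the well-definedness of the infinite product and the stabilization $p(\succeq,M,\delta)=p(\succeq,\delta)$ for $M={\rm tr}(\delta)$ is a sharpening of the paper's brief assertion that $\lim_{m\to\infty}p(\succeq,m,\delta)=p(\succeq,\delta)$, but it is a refinement of, not a departure from, the same approach.
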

\begin{proof}
Let 
\begin{align*}
\sum_{\gamma\in\mathcal O^+\cup\{0\}}c_{\gamma}q^{\gamma}
=&\left(\sum_{\alpha_1\in\mathcal O^+\cup\{0\}}1q^{\alpha_1}\right)\left(\sum_{\alpha_2\in\mathcal O^+\cup\{0\}}1q^{2\alpha_2}\right)\cdots\left(\sum_{\alpha_{m}\in\mathcal O^+\cup\{0\}}1q^{m\alpha_{m}}\right).
\end{align*}
Then $c_{\gamma}=\sum_{\alpha_1+2\alpha_2+\cdots+m\alpha_m=\gamma}1$, and thus, $c_{\gamma}$ is the number of solutions to $\delta=x_1+2x_2+\cdots+mx_m$ with $x_i\succeq0$. Let $\lambda=(\lambda_1,\lambda_2,\ldots,\lambda_r)$ be a chain partition of $\delta$ with $r\le m$. Let $\lambda_1'=\lambda_1$ and $\lambda_i'=\lambda_i-\lambda_{i-1}$ for all $2\le i\le r$. Since $\lambda_i'\in\mathcal O^+\cup\{0\}$ and
\begin{align*}
\delta=&\lambda_1+\lambda_2+\cdots+\lambda_r\\
=&r\lambda_1+(r-1)(\lambda_2-\lambda_1)+(r-2)(\lambda_3-\lambda_2)+\cdots+1(\lambda_r-\lambda_{r-1})\\
=&0+0+\cdots+0+r\lambda_1'+(r-1)\lambda_2'+\cdots+1\lambda_r',
\end{align*}
we have $\lambda_1'\ne0$ and $(\lambda_r',\lambda_{r-1}',\ldots,\lambda_1',0,0,\ldots,0)$ is a solution to $\delta=x_1+2x_2+\cdots+mx_m$ with $x_i\succeq0$. Note that the number of $0$ after $\lambda_1'$ is $m-r$. Let $(\mu_1,\mu_2,\ldots,\mu_m)$ be a solution to $\delta=x_1+2x_2+\cdots+mx_m$ with $x_i\succeq0$. Let $\mu_i'=\sum_{k=i}^r\mu_k$ for all $1\le i\le r$ where $r$ is the largest number such that $\mu_r\ne0$ for $1\le r\le m$. Since $\mu_i'-\mu_{i+1}'=\mu_i\succeq0$ for all $1\le i\le r-1$ and $\delta=\mu_1+2\mu_2+\cdots+r\mu_r=\mu_1'+\mu_2'+\cdots+\mu_r'$, we can see $(\mu_r',\mu_{r-1}',\ldots,\mu_1')$ is a chain partition of $\delta$. Let 
\begin{align*}
\phi(\lambda_1,\lambda_2,\ldots,\lambda_r)=(\lambda_r-\lambda_{r-1},\lambda_{r-1}-\lambda_{r-2},\ldots,\lambda_2-\lambda_1,\lambda_1,0,0,\ldots,0)
\end{align*}
and 
\begin{align*}
\psi(\mu_1,\mu_2,\ldots,\mu_m)=\left(\mu_r,\sum_{k=r-1}^r\mu_k,\sum_{k=r-2}^r\mu_k,\ldots,\sum_{k=1}^r\mu_k\right)
\end{align*}
where $r$ is the largest number such that $\mu_r\ne0$ for $1\le r\le m$. Since 
\begin{align*}
\psi(\phi(\lambda_1,\lambda_2,\ldots,\lambda_r))
=&\psi(\lambda_r-\lambda_{r-1},\lambda_{r-1}-\lambda_{r-2},\ldots,\lambda_2-\lambda_1,\lambda_1,0,0,\ldots,0)\\
=&\left(\lambda_1,(\lambda_2-\lambda_1)+\lambda_1,\sum_{k=2}^3(\lambda_k-\lambda_{k-1})+\lambda_1,\ldots,\sum_{k=2}^r(\lambda_k-\lambda_{k-1})+\lambda_1\right)\\
=&(\lambda_1,\lambda_2,\ldots,\lambda_r)
\end{align*}
and 
\begin{align*}
&\phi(\psi(\mu_1,\mu_2,\ldots,\mu_m))\\
=&\phi\left(\mu_r,\sum_{k=r-1}^r\mu_k,\sum_{k=r-2}^r\mu_k,\ldots,\sum_{k=1}^r\mu_k\right)\\
=&\left(\sum_{k=1}^r\mu_k-\sum_{k=2}^r\mu_k,\sum_{k=2}^r\mu_k-\sum_{k=3}^r\mu_k,\ldots,\sum_{k=r-1}^r\mu_k-\sum_{k=r}^r\mu_k,\mu_r,0,0,\ldots,0\right)\\
=&(\mu_1,\mu_2,\ldots,\mu_r,0,0,\ldots,0)\\
=&(\mu_1,\mu_2,\ldots,\mu_m),
\end{align*}
$p(\succeq,m,\delta)$ equals the number of solutions to $\delta=x_1+2x_2+\cdots+mx_m$ with $x_i\succeq0$ for all $m\in\mathbb Z^+$. Therefore, 
\begin{align*}
\sum_{\delta\in\mathcal O^+\cup\{0\}}p(\succeq,m,\delta)q^{\delta}=\prod_{i=1}^m\left(\sum_{\alpha\in\mathcal O^+\cup\{0\}}q^{i\alpha}\right).
\end{align*}
Clearly we have $\lim_{m\rightarrow\infty}p(\succeq,m,\delta)=p(\succeq,\delta)$. Thus, we have
\begin{align*}
\sum_{\delta\in\mathcal O^+\cup\{0\}}p(\succeq,\delta)q^{\delta}=\prod_{m=1}^{\infty}\left(\sum_{\alpha\in\mathcal O^+\cup\{0\}}q^{m\alpha}\right).
\end{align*}
\end{proof}

\begin{ex}
Consider the partitions of $7+2\sqrt3$ over $\mathbb Q(\sqrt3)$. The following list presents $p(\succeq,m,7+2\sqrt3)$, the solutions to $7+2\sqrt3=x_1+2x_2+\cdots+mx_m$ and tabulates the actual partitions. 
\begin{align*}
&p(\succeq,1,7+2\sqrt3)=1:&&(7+2\sqrt3),\\
&p(\succeq,2,7+2\sqrt3)=3:&&(6+2\sqrt3,1),~(5+\sqrt3,2+\sqrt3),~(4+\sqrt3,3+\sqrt3),\\
&p(\succeq,3,7+2\sqrt3)=2:&&(5+2\sqrt3,1,1),~(3+\sqrt3,3+\sqrt3,1),\\
&7+2\sqrt3=x:&&x=7+2\sqrt3,\\
&7+2\sqrt3=x+2y:&&(x,y)=(5+2\sqrt3,1),(1,3+\sqrt3),(3,2+\sqrt3),\\
&7+2\sqrt3=x+2y+3z:&&(x,y,z)=(4+2\sqrt3,0,1),(0,2+\sqrt3,1).
\end{align*}
\end{ex}

\begin{remark}
Let $K=\mathbb Q(\sqrt3)$. The ideal $(1+\sqrt3)$ of $\mathcal O_K$ is the only prime ideal of $\mathcal O$ containing $2$. An element $a+b\sqrt3$ of $\mathcal O$ does not belong to $(1+\sqrt3)$ if and only if $a+b$ is odd. We compute the number of the two kinds of chain partition of $\delta$ for $\delta=7+2\sqrt3, 9+2\sqrt3$. The first one is the chain partition with distinct parts and the next one is the chain parition whose $\delta=7+2\sqrt3$, these two numbers are both $4$, but when $\delta=9+2\sqrt3$, these numbers are $7$ and $5$ respectively. Thus the Euler Theorem does not hold in this cartegory.
\end{remark}

\end{document}